\newtheorem{theorem}{Theorem}
\newtheorem{lemma}{Lemma}
\newtheorem{proposition}{Proposition}
\begin{document}
\title[Tensor Product Decomposition and Identities]{Tensor Product Decomposition of $\widehat{\mathfrak{sl}}(n)$ Modules and Identities}
\author{Kailash C. Misra \and  Evan A. Wilson}
\address{Department of Mathematics, North Carolina State University,  Raleigh,  
NC 27695-8205}
\email{misra@ncsu.edu}
\address{
Instituto de Matem\'{a}tica e Estat\'{i}stica\\Universidade de S\~{a}o Paulo}
\email{wilsonea@ime.usp.br}
\thanks{KCM: partially supported by NSA grant, H98230-12-1-0248,\\
\indent EAW: supported by a postdoctoral fellowship from FAPESP (2011/12079-5).} 
\subjclass[2010]{17B67, 17B10, 17B37}
\date{} 

\begin{abstract}
We decompose the $\widehat{\mathfrak{sl}}(n)$-module $V(\Lambda_0) \otimes V(\Lambda_i)$ and give generating function identities for the outer multiplicities. In the process we discover some seemingly new partition identities for $n=3, 4$.
\end{abstract}

\maketitle
\bigskip
\section{Introduction}
\par
The affine Lie algebras are the simplest family of infinite dimensional Kac-Moody Lie algebras (cf. \cite{K}). The connection between affine Lie algebra representations and partition identities is well known (for example, see \cite{K1}, \cite{L}, \cite{LM}, \cite{LW}) since 1970's. The affine Lie algebra $\mathfrak{g} = \widehat{\mathfrak{sl}}(n, \mathbb{C})$ is the infinite dimensional analog of the finite dimensional simple Lie algebra $\mathfrak{sl}(n, \mathbb{C})$ of $n \times n$ trace zero matrices. In fact the affine Lie algebra $\widehat{\mathfrak{sl}}(n, \mathbb{C}) = \mathfrak{sl}(n,\mathbb{C}) \otimes \mathbb{C}[t,t^{-1}] \oplus \mathbb{C} c \oplus \mathbb{C} d$ is generated by the degree derivation 
$d=1 \otimes t\frac{d}{dt}$, and the Chevalley generators: 
$$ e_j=E_{j,j+1} \otimes 1,  \qquad f_j=E_{j+1,j} \otimes 1, \qquad  h_j= (E_{jj}-E_{j+1,j+1})\otimes 1,\ \  j=1,2,\dots, n-1, $$
$$ e_0=E_{n,1} \otimes t,\qquad f_0=E_{1,n} \otimes t^{-1}, \qquad h_0=(E_{n,n} - E_{1,1}) \otimes 1 + c,$$
where $c=\sum_{j=0}^{n-1} h_j$ spans the one-dimensional center and $E_{i,j}$ denote the $n \times n$ matrix units. With respect to the
\emph{Cartan subalgebra}  ${\mathfrak{h}}:=\text{span}_{\mathbb{C}}\{h_0, h_1, \dots, h_{n-1}\} \oplus \mathbb{C}d$, let $\Delta =\{\alpha_0, \alpha_1, \dots, \alpha_{n-1}\}$ be the set of simple roots and
$\Phi$ be the set of roots for  $\widehat{\mathfrak{sl}}(n,\mathbb{C})$. Then $\delta = \alpha_0+ \alpha_1+ \cdots + \alpha_{n-1}$ is the null root and  $P = \text{span}_{\mathbb{Z}}\{\Lambda_0,\Lambda_1,\dots, \Lambda_{n-1}, \delta\},$ is the \emph{weight lattice}  where $\langle \Lambda_j, h_k\rangle=\delta_{jk},$ and $\langle \Lambda_j,d\rangle=0$, for $j=0,1,\dots,n-1$.  The  \emph{dominant weight lattice} is defined to be  $P_+=\text{span}_{\mathbb{Z}_{\geq 0}}\{\Lambda_0,\Lambda_1,\dots,\Lambda_{n-1}\}\oplus \mathbb{Z}\delta.$  By the \emph{level} of $\lambda \in P_+$ we mean the nonnegative integer $\text{level}(\lambda)= \lambda (c)$.  For notational convenience we define $\alpha_j = \alpha_{\overline{j}}$ and $\Lambda_j = \Lambda_{\overline{j}}$ for all $j \in \mathbb{Z}$ where  $\overline{j}:=j \pmod{n}$.  

For  $\lambda \in P_+$, let $V(\lambda)$ denote the irreducible integrable $\widehat{\mathfrak{sl}}(n)$-module with highest weight $\lambda$.
For $\lambda , \mu \in P_+$, it is known that the tensor product module $V(\lambda)\otimes V(\mu)$ is completely reducible (cf. \cite[Corollary 10.7 b]{K}), that is:

\begin{equation}
V(\lambda) \otimes V(\mu)=\bigoplus_{\nu \in P_+}c^{\nu}_{\lambda,\mu}V(\nu) \label{gendecomp},
\end{equation}
where $c^{\nu}_{\lambda,\mu}$, called the outer multiplicity, denotes the number of times $V(\nu)$ occurs in this decomposition. In \cite{MW} we studied these outer multiplicities using the crystal base theory for the case $\lambda = \mu = \Lambda_0$ and obtained several identities. In this paper we consider the case $\lambda =  \Lambda_0, \mu = \Lambda_i$, where $1 \le i \le n-1$. Using the approach in \cite{MW} we obtain several different identities, some of them seemingly new for the cases $i = 1, n = 3, 4$.

\section{Decomposition of $V(\Lambda_0)\otimes V(\Lambda_i)$}

In order to decompose the module $V(\Lambda_0)\otimes V(\Lambda_i)$ we will us the theory of crystals (\cite{Ka1}, \cite{Ka2}, \cite{Lu}, \cite{HK}) associated  with integrable representations of quantum affine algebras. Indeed in this paper we will use the explicit realization of the crystal 
$ B(\Lambda_i)$ for the module $V(\Lambda_i)$ in terms of \emph{extended Young diagrams} (or \emph{colored Young diagrams})(\cite{MM}, \cite{JMMO}) which we briefly describe.

Let $I:=\{0,1,\dots n-1\}$ denote the index set for $\widehat{\mathfrak{sl}}(n)$.  An \emph{extended Young diagram} is a collection of $I$-colored boxes arranged in left-justified rows and top-justified columns, such that the number of boxes in each row is greater than or equal to the number of boxes in the row below.  To every extended Young diagram we associate a \emph{charge}, $i\in I$.  In each box, we put a \emph{color} $j\in I$ given by $j \equiv a-b+i \pmod{n}$ where $a$ is the number of columns from the right and $b$ is the number of rows from the top (see figure \ref{pattern}).  

\begin{figure}[bt]
\centering
\begin{tabular}{|c|c|c|c}
\hline
$i$ & $i+1$ & $i+2$ & $\cdots$\\
\hline
$i-1$ & $i$ & $i+1$ & $\cdots$\\
\hline
$i-2$ & $i-1$ & $i$ &  $\cdots$\\
\hline
\vdots & \vdots & \vdots \\
\end{tabular}
\caption{Color pattern for an extended Young diagram of charge $i$.  All labels are reduced modulo $n$.} \label{pattern}
\end{figure}

For example, {\scriptsize\young(120,01)} is an extended Young diagram of charge $1$ for $n=3$. The \emph{null diagram} with no boxes---denoted by $\varnothing$---is also considered an extended Young diagram.

A column in an extended Young diagram is $j$-\emph{removable} if the bottom box contains $j$ and can be removed leaving another extended Young diagram.  A column is $j$-\emph{admissible} if a box containing $j$ could be added to give another extended Young diagram.

An extended Young diagram is called $n$-\emph{regular} if there are at most $(n-1)$ rows with the same number of boxes.   Let $\mathcal{Y}(i)$ denote the collection of all $n$-regular extended Young diagrams of charge $0$.  Then $\mathcal{Y}(i)$ can be given the structure of a crystal with the following actions of $\tilde{e}_j$, $\tilde{f}_j$, $\varepsilon_j,$ $\varphi_j$, and wt$(\cdot)$.  For each $j\in I$ and $b\in \mathcal{Y}(i)$ we define the \emph{$j$-signature} of $b$ to be the string of $+$'s, and $-$'s in which each $j$-admissible column receives a $+$ and each $j$-removable column receives a $-$ reading from right to left.  The \emph{reduced $i$-signature} is the result of recursively canceling all `$+-$' pairs in the $i$-signature leaving a string of the form $(-,\dots, -,+\dots, +)$.  The Kashiwara operator $\tilde{e}_j$ acts on $b$ by removing the box corresponding to the rightmost $-$, or maps $b$ to $0$ if there are no minus signs.  Similarly, $\tilde{f}_j$ adds a box to the bottom of the column corresponding to the leftmost $+$, or maps $b$ to $0$ if there are no plus signs.  The function $\varphi_j(b)$ is the number of $+$ signs in the reduced $j$-signature of $b$ and $\varepsilon_j(b)$ is the number of $-$ signs.  We define 
wt$:\mathcal{Y}(i)\rightarrow P$ by $b \mapsto \Lambda_i-\sum_{j=0}^{n-1}$\# $\{j$-colored boxes in $b\}\alpha_j$.  Then $\mathcal{Y}(i)\cong B(\Lambda_i).$


In order to obtain the decomposition of $V(\Lambda_0)\otimes V(\Lambda_i)$, it suffices to find the set of the \emph{maximal} elements of the crystal base $B(\Lambda_0) \otimes B(\Lambda_i)$, i.e. the set of all $b_1\otimes b_2\in B(\Lambda_0) \otimes B(\Lambda_i)$ for which $\tilde{e}_i (b_1\otimes b_2)=0$ for all $i\in I$.  Maximal elements are characterized by the following:
\begin{lemma}[see \cite{JMMO}]
An element $b_1 \otimes b_2 \in B(\Lambda_0)\otimes B(\Lambda_i)$ is maximal if and only if 
$\tilde{e}_jb_1=0 $ and $\tilde{e}_j^{\delta_{j0}+1}b_2=0$ for all $j\in I$.  
\end{lemma}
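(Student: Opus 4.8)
The plan is to read off the maximality condition directly from the tensor product rule for crystals, handling the index $j=0$ separately since it is the only index with $\langle\Lambda_0,h_j\rangle\neq 0$. Recall (cf.\ \cite{HK}) that for $b_1\otimes b_2\in B(\Lambda_0)\otimes B(\Lambda_i)$ the Kashiwara operator acts by
\begin{equation*}
\tilde{e}_j(b_1\otimes b_2)=
\begin{cases}
\tilde{e}_jb_1\otimes b_2 & \text{if }\varphi_j(b_1)\geq\varepsilon_j(b_2),\\
b_1\otimes\tilde{e}_jb_2 & \text{if }\varphi_j(b_1)<\varepsilon_j(b_2).
\end{cases}
\end{equation*}
First I would show that, for a fixed $j$, the equation $\tilde{e}_j(b_1\otimes b_2)=0$ holds if and only if $\tilde{e}_jb_1=0$ and $\varphi_j(b_1)\geq\varepsilon_j(b_2)$. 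Indeed, in the first case of the rule the product vanishes exactly when $\tilde{e}_jb_1=0$; in the second case it vanishes only when $\tilde{e}_jb_2=0$, i.e.\ $\varepsilon_j(b_2)=0$, which is incompatible with the hypothesis $\varphi_j(b_1)<\varepsilon_j(b_2)$ because $\varphi_j(b_1)\geq 0$ always. Hence the second case never contributes a maximal element.

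Next I would impose this condition for every $j\in I$ simultaneously. The requirement $\tilde{e}_jb_1=0$ for all $j$ forces $b_1$ to be the highest weight element $u_{\Lambda_0}$ of $B(\Lambda_0)$, since in a highest weight crystal the highest weight vector is the unique element annihilated by all the $\tilde{e}_j$. For this element $\varepsilon_j(u_{\Lambda_0})=0$ and $\mathrm{wt}(u_{\Lambda_0})=\Lambda_0$, so the crystal identity $\varphi_j-\varepsilon_j=\langle\mathrm{wt}(\cdot),h_j\rangle$ gives $\varphi_j(b_1)=\langle\Lambda_0,h_j\rangle=\delta_{j0}$. The remaining inequalities $\varphi_j(b_1)\geq\varepsilon_j(b_2)$ thus become $\varepsilon_j(b_2)\leq\delta_{j0}$ for every $j$.

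Finally I would translate $\varepsilon_j(b_2)\leq\delta_{j0}$ into the stated operator condition using $\tilde{e}_j^{m}b_2=0\iff m>\varepsilon_j(b_2)$. With $m=\delta_{j0}+1$ this reads $\tilde{e}_j^{\delta_{j0}+1}b_2=0$, matching the claim: for $j\neq 0$ it says $\tilde{e}_jb_2=0$, while for $j=0$ it says $\tilde{e}_0^{\,2}b_2=0$. Reading the chain of equivalences in both directions then yields the biconditional. I expect the only real point of care to be the index $j=0$: because $\langle\Lambda_0,h_0\rangle=1$ rather than $0$, the second factor is permitted one application of $\tilde{e}_0$, and this is exactly the source of the exponent $\delta_{j0}+1$. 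Getting the orientation of the tensor product rule right---and hence which factor absorbs the extra $\tilde{e}_0$---is the step most prone to error, so I would fix the convention from \cite{HK} at the outset and keep it throughout.
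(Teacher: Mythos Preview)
Your argument is correct: the tensor product rule forces $\tilde{e}_j(b_1\otimes b_2)=0$ to be equivalent to $\tilde{e}_jb_1=0$ together with $\varphi_j(b_1)\geq\varepsilon_j(b_2)$, and since the first condition (for all $j$) pins down $b_1$ as the highest weight element of $B(\Lambda_0)$ with $\varphi_j(b_1)=\langle\Lambda_0,h_j\rangle=\delta_{j0}$, the second condition becomes exactly $\varepsilon_j(b_2)\leq\delta_{j0}$, i.e.\ $\tilde{e}_j^{\,\delta_{j0}+1}b_2=0$. The biconditional follows cleanly in both directions.

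As for comparison with the paper: there is none to make. The paper does not supply a proof of this lemma; it simply records it with the attribution ``see \cite{JMMO}'' and moves on. Your write-up is precisely the standard derivation from the crystal tensor product rule and would serve perfectly well as the omitted proof. The one cosmetic point is that the conclusion $b_1=u_{\Lambda_0}$, which you derive en route, is actually the content of the paper's \emph{next} lemma (Lemma~\ref{l2}, condition on $b_1$); you may wish to phrase the final statement purely as $\tilde{e}_jb_1=0$ for all $j$, exactly as the lemma does, and note separately that this forces $b_1$ to be the null diagram.
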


\begin{lemma}[\cite{MW}]\label{l2}
An element $b_1 \otimes b_2$ of the $U_q(\widehat{\mathfrak{sl}}(n))$ crystal $B(\Lambda_0) \otimes B(\Lambda_i)$ is maximal if and only if $b_1$ is the null diagram and the following two conditions are satisfied for $b_2$:
\begin{enumerate}
\item the first removable column from the right in $b_2$ is $0$-removable,
\item for all $j \in \{0,1,\dots, n-1\}$, if the $k$th admissible column in $b_2$ is $j$-admissible then the $k+1$st removable column, if it exists, is $j$-removable.
\end{enumerate}
\end{lemma}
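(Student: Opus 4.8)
The plan is to derive Lemma \ref{l2} from the previous lemma by converting its two hypotheses into diagram combinatorics. The hypothesis $\tilde e_j b_1 = 0$ for all $j \in I$ says that $b_1$ is a highest weight element of $B(\Lambda_0)$; since $B(\Lambda_0)$ is a highest weight crystal, its unique such element is the null diagram, so $b_1 = \varnothing$, which is the first assertion. For $b_2$, rewriting $\tilde e_j^{\delta_{j0}+1}b_2 = 0$ through the functions $\varepsilon_j$ turns it into $\varepsilon_0(b_2) \le 1$ and $\varepsilon_j(b_2) = 0$ for all $j \ne 0$. Thus the task reduces to showing that, for an $n$-regular diagram $b_2$, these signature conditions hold simultaneously if and only if (1) and (2) hold.

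First I would record the combinatorial scaffolding; write $c(x)$ for the color of a cell $x$. List the addable and removable cells of $b_2$ in the order in which the signatures are read, i.e. by decreasing value of the content $a-b$. A classical fact is that they strictly alternate $A_1, R_1, A_2, R_2, \dots, R_s, A_{s+1}$, beginning and ending with an addable cell, so there is exactly one more addable than removable cell. The quantitative input I would prove is that the content of $A_k$ exceeds that of $R_k$ by the multiplicity $\mu_k$ of the corresponding repeated row length; since $b_2$ is $n$-regular, $1 \le \mu_k \le n-1$, and therefore $A_k$ and $R_k$ always carry \emph{distinct} colors. By contrast the reverse gaps, from $R_k$ to $A_{k+1}$, are unconstrained and may be multiples of $n$, which is exactly what makes the matching delicate.

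With this in hand, a surviving $-$ in the reduced $j$-signature is an $R$-cell not cancelled by an $A$-cell of color $j$ lying to its right. The rightmost removable cell $R_1$ has only $A_1$ to its right, and $A_1, R_1$ differ in color, so $R_1$ can never cancel; hence $\varepsilon_j(b_2) \ge 1$ for $j$ the color of $R_1$, and the requirement $\varepsilon_j = 0$ for $j\neq 0$ forces $c(R_1) \equiv 0$, which is precisely (1). Since $R_1$ already consumes the single permitted surviving sign, maximality becomes: $R_1$ has color $0$ and every other removable cell cancels. For the second clause I would match each $R_{k+1}$ with $A_k$: condition (2) is exactly the statement that these matched cells share a color, and when it holds the pairing places, above any content level, at least as many color-$j$ addable as removable cells (each color-$j$ cell $R_{k+1}$ being protected by its partner $A_k$ at strictly higher content). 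Consequently all of $R_2,\dots,R_s$ cancel and only $R_1$, of color $0$, survives, giving $\varepsilon_0\le 1$ and $\varepsilon_j=0$ otherwise.

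I expect the crux to be the converse of that last step: showing that maximality \emph{forces} the local pairing $A_k \leftrightarrow R_{k+1}$, and hence condition (2). The reduced signature is a global, non-local bracketing, so a priori a removable cell could be cancelled by a distant addable cell of the right color, and this must be excluded. The leverage is the $n$-regularity fact from the second paragraph: the addable cell $A_{k+1}$ immediately preceding $R_{k+1}$ in the ordering has the wrong color (its content differs from that of $R_{k+1}$ by $\mu_{k+1}\le n-1$), so the nearest candidate is always disqualified; an induction on $k$ — matching $R_2, R_3, \dots$ in turn while tracking which addable cells have already been used — then pins each $R_{k+1}$ to $A_k$ and yields $c(A_k)=c(R_{k+1})$. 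Making this induction airtight, in particular handling the cases where several consecutive cells happen to share a color (possible precisely because the reverse gaps can be multiples of $n$), is where the genuine work lies.
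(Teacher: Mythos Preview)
The paper does not prove this lemma here; it is quoted from \cite{MW} with no argument given, so there is no in-paper proof to compare against.

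Your argument is correct, and the converse step you flag as the crux is in fact clean once you phrase it via prefix counts rather than explicit bracket-tracking. Assume inductively that $c(A_m)=c(R_{m+1})$ for all $m<k$, and set $j=c(R_{k+1})$. In the prefix $A_1,R_1,\dots,A_{k+1},R_{k+1}$ (all cells of content at least that of $R_{k+1}$), the $j$-colored removable cells are $R_{k+1}$ itself, those $R_{m+1}$ with $m<k$ and $c(A_m)=j$, and $R_1$ when $j=0$; the $j$-colored addable cells are the $A_m$ with $m\le k$ and $c(A_m)=j$, since $c(A_{k+1})\ne c(R_{k+1})$ by $n$-regularity. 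The excess of $-$'s over $+$'s in this prefix is therefore exactly
\[
1+[\,j=0\,]-[\,c(A_k)=j\,],
\]
and the hypothesis $\varepsilon_j\le\delta_{j0}$ forces $c(A_k)=j=c(R_{k+1})$. The induction closes with no separate case analysis for repeated colors; the only structural inputs are the strict content interleaving $A_1>R_1>A_2>\cdots$ and the inequality $c(A_m)\ne c(R_m)$, both of which you already isolated. So the worry in your last paragraph is unfounded: the ``distant cancellation'' obstruction never materializes, because the prefix count pins down the partner of $R_{k+1}$ uniquely.
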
 

As an application of Lemma \ref{l2}, all maximal elements of weight $2\Lambda_0-3\delta$ for $B(\Lambda_0)\otimes B(\Lambda_2)$ with $n=3$ are given in Figure \ref{delta2}.\\
\begin{figure}[tb]
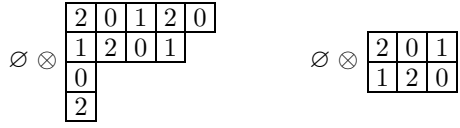

$\varnothing \;\otimes$ \parbox[c]{1in}{\young(20120,1201,0,2)}\qquad $\varnothing \;\otimes$ \parbox[c]{2in}{\young(201,120)}
\caption{Examples of highest weight elements in $B(\Lambda_0)\otimes B(\Lambda_2)$, for $n=3$.}  \label{delta2}
\end{figure}
We denote a \emph{partition} by a finite sequence $(\lambda_1^{f_1},\lambda_2^{f_2},\dots, \lambda_l^{f_l}),$ where $\lambda_k\in \mathbb{Z}_{> 0},$ $\lambda_k>\lambda_{k+1},$ and $f_k\in \mathbb{Z}_{> 0}$ denotes the multiplicity of 
$\lambda_k$.  Each $b\in\mathcal{\mathcal{Y}}(i)$ can be uniquely represented as a partition where $\lambda_k$ is the number of boxes in a given row, and $f_k$ is the number of rows having $\lambda_k$ boxes.  For example, the two diagrams in Figure \ref{delta2} correspond to the partitions $(5, 4,1^2),$ and $(3^2)$.  We can now rephrase Lemma \ref{l2} in terms of partitions as follows.
\begin{lemma}\label{l3}
The highest weight elements of $B(\Lambda_0)\otimes B(\Lambda_i)$ are in a one-to-one correspondence with the set of all partitions $\lambda=(\lambda_1^{f_1},\lambda_2^{f_2},\dots, \lambda_l^{f_l})$ with $f_k< n,$ $k=1,2,\dots, l$,  satisfying the conditions:

\begin{enumerate}
\item $ \lambda_1- f_1 +i \equiv 0\pmod{n}$,
\item $f_k+f_{k+1}+\lambda_{k}-\lambda_{k+1}\equiv0 \pmod{n}$, for $k<l$.
\end{enumerate}
\end{lemma}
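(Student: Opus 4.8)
The plan is to apply Lemma~\ref{l2} to reduce the problem to conditions on the single diagram $b_2$, and then to read those conditions off from the geometry of the partition $\lambda=(\lambda_1^{f_1},\dots,\lambda_l^{f_l})$. By Lemma~\ref{l2} every maximal element has the form $\varnothing\otimes b_2$, so maximal elements are in bijection with those $b_2\in\mathcal{Y}(i)$ satisfying conditions (1) and (2) there, and each such $b_2$ is recorded uniquely by its partition. Membership in $\mathcal{Y}(i)$ is exactly $n$-regularity, which says that at most $n-1$ rows share a common length; since $f_k$ counts the rows of length $\lambda_k$, this is precisely the requirement $f_k<n$ for every $k$. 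Thus it remains only to translate the two column conditions of Lemma~\ref{l2} into the two congruences.

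To do this I first locate the removable and admissible columns of the shape of $\lambda$ and compute the relevant colors from the pattern of Figure~\ref{pattern}, which assigns the box in row $b$ from the top and column $c$ from the left the color $c-b+i\pmod n$ (as one checks directly against the labelled diagrams). Writing $S_k=f_1+\cdots+f_k$, the blocks of equal rows end at rows $S_1<S_2<\cdots<S_l$. The removable columns are the right ends of these blocks: the $k$th one from the right is the box in row $S_k$ and column $\lambda_k$, of color $\lambda_k-S_k+i$. The admissible columns are the concave corners, namely the cell just to the right of the top row of block $k$, in column $\lambda_k+1$ (for $k=1,\dots,l$), together with the fresh bottom row in column $1$; a short computation gives the $k$th admissible color as $\lambda_k-S_{k-1}+i$, with $S_0=0$. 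Reading right to left these interleave along the boundary as $A_1,R_1,A_2,R_2,\dots,A_l,R_l,A_{l+1}$, so the $k$th admissible column is $A_k$ and the $(k+1)$st removable column is $R_{k+1}$.

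With the colors in hand the translation is immediate. Condition (1) of Lemma~\ref{l2} requires the first removable column $R_1$ to be $0$-removable, i.e. $\lambda_1-S_1+i=\lambda_1-f_1+i\equiv 0\pmod n$, which is condition (1) of the present Lemma. Condition (2) requires, for each $k$ with $1\le k\le l-1$, that the color of $A_k$ equal the color of $R_{k+1}$, i.e. $\lambda_k-S_{k-1}+i\equiv\lambda_{k+1}-S_{k+1}+i\pmod n$; since $S_{k+1}-S_{k-1}=f_k+f_{k+1}$ this rearranges to $f_k+f_{k+1}+\lambda_k-\lambda_{k+1}\equiv 0\pmod n$, which is condition (2). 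Reversing each step shows that any $n$-regular partition meeting the two congruences arises from a maximal element, so the correspondence is a bijection.

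The step needing the most care, and the one I expect to be the main obstacle, is the bookkeeping of the corners and their right-to-left order. In the degenerate case $\lambda_k=\lambda_{k+1}+1$ a single physical column is at once removable (as $R_k$) and admissible (as $A_{k+1}$), so I must verify that counting the admissible and removable columns in their own separate lists still yields the pairing $A_k\leftrightarrow R_{k+1}$. I also have to treat the two boundary corners correctly: the top extension $A_1$ of the longest row (whose height is read as $S_0=0$) and the new bottom row $A_{l+1}$, the latter of which has no partner removable column and therefore, consistently with the index range $k<l$, contributes no congruence.
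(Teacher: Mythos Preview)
Your proof is correct and follows essentially the same route as the paper's: invoke Lemma~\ref{l2} to reduce to conditions on $b_2$, identify $n$-regularity with $f_k<n$, locate the $k$th admissible corner at column $\lambda_k+1$ with height $S_{k-1}$ and the $(k+1)$st removable corner at column $\lambda_{k+1}$ with height $S_{k+1}$, compute the two colors, and subtract to obtain the congruence $f_k+f_{k+1}+\lambda_k-\lambda_{k+1}\equiv 0$. Your treatment is in fact more thorough than the paper's, which omits the discussion of the interleaving $A_1,R_1,A_2,\dots$, the degenerate case $\lambda_k=\lambda_{k+1}+1$, and the boundary corners $A_1$ and $A_{l+1}$; the paper also uses the opposite sign convention for colors (effectively $b-c+i$ rather than your $c-b+i$), but since only differences modulo $n$ are compared this is immaterial.
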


\begin{proof} The condition that each $f_k< n$ is equivalent to the condition that $b\in\mathcal{\mathcal{Y}}(i)$ is $n$-regular.  
The first column from the right is always removable, so by condition (1) of Lemma \ref{l2} the first column must contain a 0-colored box  in the bottom row.  Since $\lambda_1$ is the number of columns in the diagram and $f_1$ is the number of boxes in the rightmost column, we see that $\lambda_1-f_1 +i\equiv 0 \pmod{n}$.

Now, suppose that the $k$th admissible column is $j$-admissible, and there exists a removable column to the left of that column.  The $k$th admissible column is $\lambda_{k}+1$ columns from the left and contains $f_1+f_2+\dots +f_{k-1}$ boxes.  Therefore: 
\begin{eqnarray}
f_1+f_2+\dots +f_{k-1}-(\lambda_k+1)+i&\equiv&j-1 \pmod{n}\nonumber\\
f_1+f_2+\dots +f_{k-1}-\lambda_k+i&\equiv&j \pmod{n}\label{t1}
\end{eqnarray}

By condition (2) of Lemma \ref{l2} the $k+1$st removable column (the $\lambda_{k+1}$st from the left) is also $j$-removable, and contains $f_1+f_2+\dots +f_k+f_{k+1}$ boxes.   Therefore:
\begin{equation}
f_1+f_2+\dots+f_k+f_{k+1}-\lambda_{k+1}+i \equiv j \pmod{n}\label{t2}
\end{equation}
Subtracting equation \eqref{t1} from equation \eqref{t2} we obtain condition (2). Furthermore, if the partition satisfies the conditions  (1) and (2), then it is in correspondence with an extended Young diagram as in Lemma \ref{l2}.
\end{proof}
Let $\mathscr{C}_n^i$ be the collection of all partitions satisfying conditions (1) and (2) in Lemma \ref{l3}. Then each such partition in $\mathscr{C}_n^i$ corresponds to a unique maximal element in  $B(\Lambda_0)\otimes B(\Lambda_i)$. Let  
$\mathscr{C}_{n,\mu}^i$ denote the set of elements in $\mathscr{C}_n^i$ of weight $\mu$. It is known that each connected component of the crystal $B(\Lambda_0)\otimes B(\Lambda_i)$ is  the crystal for the corresponding irreducible summand of the module $V(\Lambda_0)\otimes V(\Lambda_i)$. In the following lemma we determine the connected components of the crystal $B(\Lambda_0)\otimes B(\Lambda_i)$.

\begin{lemma}\label{l4}
Each connected component of $B(\Lambda_0)\otimes B(\Lambda_i)$ is isomorphic to \\
$B(\Lambda_t+\Lambda_u-k\delta)$ for some $t,u \in\{0,1,\dots, n-1\}$ such that $t+u \equiv i\pmod{n}$ and $k\in \mathbb{Z}_{\geq 0}$ such that $k\geq t$ if $t\leq i$ and $k\geq t-i$ if $t>i$ .
\end{lemma}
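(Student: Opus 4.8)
The plan is to read off the highest weight of an arbitrary connected component directly from its maximal element and then convert the positivity of box counts into the stated bound on $k$. First I would fix a component and let $b_1 \otimes b_2$ be its unique maximal element. By Lemma~\ref{l3} we have $b_1 = \varnothing$ and $b_2$ corresponds to a partition $\lambda = (\lambda_1^{f_1}, \dots, \lambda_l^{f_l}) \in \mathscr{C}_n^i$; writing $m_j$ for the number of $j$-colored boxes of $b_2$, the highest weight of the component is
\[
\nu = \operatorname{wt}(b_1 \otimes b_2) = \Lambda_0 + \Lambda_i - \sum_{j=0}^{n-1} m_j \alpha_j .
\]
Since $V(\Lambda_0) \otimes V(\Lambda_i)$ is integrable of level $2$, the weight $\nu$ is a level-$2$ dominant weight, hence necessarily of the form $\Lambda_t + \Lambda_u - k\delta$ with $t,u \in \{0,\dots,n-1\}$ and $k \in \mathbb{Z}$. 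Pairing the two expressions for $\nu$ with the derivation $d$ (using $\langle \Lambda_p, d\rangle = 0$, $\langle \alpha_j, d\rangle = \delta_{j0}$, $\langle \delta, d\rangle = 1$) gives $k = m_0 \geq 0$, the number of $0$-colored boxes. Applying instead the homomorphism $\phi \colon P \to \mathbb{Z}/n\mathbb{Z}$ defined by $\phi(\Lambda_p) = \overline{p}$ and $\phi(\delta) = 0$ — which annihilates every $\alpha_j$, hence all of $Q$ — yields $t + u \equiv \phi(\nu) \equiv i \pmod{n}$. This disposes of the first two assertions.

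For the bound on $k$, I would pair the two expressions for $\nu$ with each $h_p$. Using $\langle \alpha_j, h_p\rangle = a_{pj}$ for the cyclic Cartan matrix of $A_{n-1}^{(1)}$, this produces the discrete Poisson equation on the cycle $\mathbb{Z}/n\mathbb{Z}$,
\[
2m_p - m_{p-1} - m_{p+1} = \delta_{p0} + \delta_{pi} - \delta_{pt} - \delta_{pu} \qquad (p \in \mathbb{Z}/n\mathbb{Z}),
\]
with all indices read modulo $n$. Setting $n_p := m_p - m_0 = m_p - k$, the same equation holds with $n_0 = 0$ (the discrete Laplacian annihilates constants), so the sequence $(n_p)$ is determined by $(t,u)$ alone. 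The positivity $m_p = k + n_p \geq 0$ then forces $k \geq -\min_p n_p$, and the whole content of the bound is the evaluation of this minimum.

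To evaluate it I would solve the equation by tracking the slopes $n_{p+1} - n_p$, which are constant between the charged sites $0, i$ (sources) and $t, u$ (sinks) and jump by $\mp 1$ across each. Because $t + u \equiv i$ with $0 \le t, u \le n-1$, either $t + u = i$, in which case both sinks lie in $[0,i]$, or $t + u = n + i$, in which case both lie in $(i, n-1]$; labeling $t = \min(t,u)$ places us respectively in the regime $t \le i$ or $t > i$. A short computation, in which single-valuedness $\sum_p (n_p - n_{p+1}) = 0$ fixes the one free slope, shows that $n$ descends linearly from $0$ to its minimum at the nearer sink and then climbs back, giving $\min_p n_p = -t$ in the first regime and $\min_p n_p = i - t$ in the second; hence $k \ge t$ when $t \le i$ and $k \ge t - i$ when $t > i$, as claimed. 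The coincidence cases $t = 0$ and $t = u$ are covered by the same computation with a doubled charge. I expect the main obstacle to be purely organizational: choosing the labeling $t = \min(t,u)$ so that the two geometric regimes align with the two clauses of the bound, and verifying that the degenerate charge configurations alter neither the location nor the value of the minimum.
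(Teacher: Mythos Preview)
Your argument is correct and genuinely different from the paper's. The paper never invokes the abstract shape of level-$2$ dominant weights; instead it starts from the partition $(\lambda_1^{f_1},\dots,\lambda_l^{f_l})\in\mathscr{C}_n^i$ attached to $b_2$, writes $\alpha=\Lambda_i-\operatorname{wt}(b_2)$ as the triple sum $\sum_{r,j_1,j_2}\alpha_{j_1-j_2+i}$, and telescopes it using the congruence conditions (1)--(2) of Lemma~\ref{l3} to obtain the closed form $\alpha=\Lambda_0+\Lambda_i-\Lambda_{\lambda_l-s_{l-1}+i}-\Lambda_{i-s_l}+k\delta$. It then rewrites $\Lambda_0+\Lambda_i-\Lambda_t-\Lambda_u$ in the root basis via the $\mathfrak{sl}(n)$ formula $\Lambda_t=\Lambda_0+\omega_t$ and reads off the bound on $k$ from nonnegativity of a single coefficient. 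Your route bypasses all of this combinatorics: you use only that $\nu$ is dominant of level $2$, pair with $d$ and the $h_p$, and solve the resulting cyclic Poisson equation directly. The piecewise-linear solution you sketch (slopes $0,-1,0,+1$ in the case $t+u=i$ and $0,-1,0,+1$ on the shifted arcs when $t+u=n+i$) is exactly right, and the degenerate configurations you flag cause no trouble.

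What each approach buys: the paper's computation produces the explicit identification $\{t,u\}=\{\overline{\lambda_l-s_{l-1}+i},\overline{i-s_l}\}$ of the highest weight in terms of the partition, which is what one needs to actually count $|\mathscr{C}^i_{n,\Lambda_t+\Lambda_u-k\delta}|$ later; your argument is cleaner and manifestly independent of the extended Young diagram model, but it tells you only the \emph{possible} highest weights, not which partition lands where. Note also that your choice $t=\min(t,u)$ makes the two clauses of the bound read literally as stated in the lemma; the paper's own proof takes $t$ to be the index lying in $[i/2,(n+i)/2]$ and accordingly derives $k\ge i-t$ in the first regime, consistent with Theorem~\ref{partition} but differing from the lemma's wording by the swap $t\leftrightarrow u$.
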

\begin{proof} 
Let $b\otimes b'\in B(\Lambda_0)\otimes B(\Lambda_i)$ be maximal.  Then $\text{wt}(b)=\Lambda_0$, and $b'$ corresponds to a partition $(\lambda_1^{f_1},\lambda_2^{f_2}\dots,\lambda_l^{f_l})\in \mathscr{C}_n$.  We set $\alpha=\Lambda_i-\text{wt}(b')$ and use the weight formula in $B(\Lambda_i)$  to compute $\alpha$:

\begin{eqnarray}
\alpha&=&\sum_{r=1}^{l}\sum_{j_1=1}^{\lambda_{r}}\sum_{j_2=s_{r-1}+1}^{s_{r}}\alpha_{j_1-j_2+i}, \label{trickysum}\\
	&=&\sum_{r=1}^{l}\sum_{j_1=1}^{\lambda_{r}}\sum_{j_2=s_{r-1}+1}^{s_{r}}(2\Lambda_{j_1-j_2+i}-\Lambda_{j_1-j_2+i-1}-\Lambda_{j_1-j_2+1}+\delta_{\overline{j_1-j_2+i},0}\delta),\nonumber
\end{eqnarray}
where $s_j:=\sum_{m=1}^j f_m$.  The sums telescope, leaving:

\begin{equation}
\alpha=
{\sum_{r=1}^l(\Lambda_{\lambda_r-s_r+i}-\Lambda_{i-s_r}+\Lambda_{i-s_{r-1}}-
\Lambda_{\lambda_r-s_{r-1}+i})}+k\delta,\label{withdelta}
\end{equation}
where $k\in \mathbb{Z}_{\geq 0}$ is the number of $\alpha_0$'s in the sum \eqref{trickysum}.  Now consider the sum in \eqref{withdelta}.
\begin{eqnarray*}
\lefteqn{\sum_{r=1}^l(\Lambda_{\lambda_r-s_r+i}-\Lambda_{i-s_r}+\Lambda_{i-s_{r-1}}-
\Lambda_{\lambda_r-s_{r-1}+i})}\\
	&=&\Lambda_{\lambda_1-f_1+i}+\sum_{r=2}^l\Lambda_{\lambda_{r-1}+f_{r-1}+f_r-s_{r}+i}-\sum_{r=1}^l\Lambda_{i-s_{r}}
	+\sum_{r=1}^l\Lambda_{i-s_{r-1}}-\sum_{r=1}^l\Lambda_{\lambda_r-s_{r-1}+i}\\
	&&\qquad \text{since }\lambda_r\equiv \lambda_{r-1}+f_{r-1}+f_{r}\pmod{n} \\
&=&\Lambda_{0}+\sum_{r=2}^l\Lambda_{\lambda_{r-1}-s_{r-2}+i}-\sum_{r=1}^l
	\Lambda_{\lambda_r-s_{r-1}+i}+\sum_{r=1}^l\Lambda_{i-s_{r-1}}
	-\sum_{r=1}^l\Lambda_{i-s_{r}}\\
	&&\qquad \text{since }\lambda_1-f_1+i\equiv 0 \pmod{n}\\
&=&\Lambda_{0}+\sum_{r=1}^{l-1}\Lambda_{\lambda_{r}-s_{r-1}+i}
	-\sum_{r=1}^l\Lambda_{\lambda_r-s_{r-1}+i}
	+\sum_{r=0}^{l-1}\Lambda_{i-s_{r}}-\sum_{r=1}^l\Lambda_{i-s_{r}}\\
&=&\Lambda_{0}+\Lambda_i-\Lambda_{\lambda_l-s_{l-1}+i}-\Lambda_{i-s_l}.
\end{eqnarray*}

By repeated use of condition (2) of Lemma \ref{l3} we can see that $\lambda_l-s_{l-1}-s_l+2i \equiv \lambda_1-s_0-s_1+2i=\lambda_1-f_1 +2i \equiv i  \pmod{n}.$  Therefore, one of the numbers $\overline{\lambda_l-s_{l-1}+i},\overline{i-s_l}$ is in the interval $I=[ i/2 ,(n+i)/2 ]$ and the other is outside this interval.  Let $t$ be the one in $I$  and $u$ be the one outside $I$.  

There are two cases to consider: $\lceil i/2 \rceil \leq t \leq i$ and $i<t\leq \lfloor (n+i)/2 \rfloor$.  Since $\Lambda_t=\Lambda_0+\omega_t$ (\cite{K}, eq. 12.4.3), using  well-known formulas for the fundamental dominant weights $\omega_t$ of $\mathfrak{sl}(n)$ (see for example \cite{H}) we have:
\begin {eqnarray*}
\alpha&=&\Lambda_0+\Lambda_{i}-\Lambda_t-\Lambda_{u}+k\delta\\
	&=&\Lambda_0+\Lambda_0+\frac{1}{n}\left (\sum_{r=1}^{i}r(n-i)\alpha_r+
		\sum_{r=i+1}^{n-1}i(n-r)\alpha_r\right )\\
	&&-\:\Lambda_0-\frac{1}{n}\left (\sum_{r=1}^{t}r(n-t)\alpha_r+
		\sum_{r=t+1}^{n-1}t(n-r)\alpha_r\right )\\
	&&-\:\Lambda_0-\frac{1}{n}\left (\sum_{r=1}^{u}r(n-u)\alpha_r+
		\sum_{r=u+1}^{n-1}u(n-r)\alpha_r\right )+k\delta.
\end{eqnarray*}

In the case $t \leq  i$ we have $u=i-t$.  Therefore:
\begin{eqnarray*}
\alpha&=&-\sum_{r=1}^{u} r\alpha_r-\sum_{r=u+1}^t (i-t)\alpha_r-\sum_{r=t+1}^{i}(i-r)\alpha_r+k\delta\\
	&=&\sum_{r=0}^u(k-r)\alpha_r+\sum_{r=u+1}^t (k-i+t)\alpha_r+\sum_{r=t+1}^i (k-i+r)\alpha_r+\sum_{r=i+1}^{n-1}k\alpha_r
\end{eqnarray*}
We must have  $k\geq i-t$ in order for the coefficient of $\alpha_{u+1}$ to be $\geq 0$.

If $t > i$ then we have $u=n+i-t$.  Therefore:
\begin{eqnarray*}
\alpha&=&-\sum_{r=i+1}^t (r-i)\alpha_r-\sum_{r=t+1}^u (t-i)\alpha_r-\sum_{r=u+1}^{n-1}(n-r)\alpha_r+k\delta\\
	&=&\sum_{r=0}^ik\alpha_r+\sum_{r=i+1}^t (k-r+i)\alpha_r+\sum_{r=t+1}^u (k-t+i)\alpha_r+\sum_{r=u+1}^{n-1}(k-n+r)\alpha_r
\end{eqnarray*}
In this case  $k\geq t-i$ for the coefficient of $\alpha_{t+1}$ to be $\geq 0$.

  Therefore $\text{wt}(b\otimes b')=\Lambda_0+\Lambda_i-\alpha=\Lambda_t+\Lambda_{u}-k\delta$ where $k\geq t-i$.  Hence $b\otimes b'$ is a maximal element of weight $\Lambda_t+\Lambda_{n+i-t}-k\delta$, and the component of $B(\Lambda_0)\otimes B(\Lambda_i)$ containing $b\otimes b'$ is isomorphic to $B(\Lambda_t+\Lambda_{u}-k\delta)$ for some $k$ satisfying the given conditions.
\end{proof}
\begin{theorem} \label{partition}
The $\widehat{\mathfrak{sl}}(n)$-module $V(\Lambda_0)\otimes V(\Lambda_i)$ decomposes as the direct sum\\
 $\bigoplus_{t=\lceil i/2 \rceil}^{i}\bigoplus_{k=i-t}^{\infty} c^{\Lambda_t+\Lambda_{i-t}-k\delta}_{\Lambda_0,\Lambda_i}V(\Lambda_t+\Lambda_{i-t}-k\delta)\oplus \bigoplus_{t=i+1}^{\lfloor(n+i)/2\rfloor}\bigoplus_{k=t-i}^{\infty} c^{\Lambda_t+\Lambda_{n+i-t}-k\delta}_{\Lambda_0,\Lambda_i} \\ V(\Lambda_t+\Lambda_{n+i-t}-k\delta)$ and the outer multiplicities are given by $$c^{\Lambda_t+\Lambda_{u}-k\delta}_{\Lambda_0,\Lambda_i}=|\mathscr{C}_{n,\Lambda_t+\Lambda_{u}-k\delta}^i|,$$
where the absolute value sign denotes the cardinality.
Here, if $c^{\Lambda_t+\Lambda_{u}-k\delta}_{\Lambda_0,\Lambda_i}=0$ then $V(\Lambda_t+\Lambda_u- k\delta)$ does not occur in the decomposition. Furthermore, if $t<i$ and $k=i-t$ (resp. $t \geq i$ and $k=t-i$) then the unique maximal element is an $(i-t) \times (n-t)$ (resp. $(t-i) \times t$) rectangle.
\end{theorem}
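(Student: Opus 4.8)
The plan is to assemble the theorem from the three lemmas already in hand, treating the decomposition and the multiplicity formula as a bookkeeping consequence of crystal theory, and reserving the genuine combinatorial work for the final ``rectangle'' assertion.

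First I would record the standard dictionary between the crystal $B(\Lambda_0)\otimes B(\Lambda_i)$ and the module $V(\Lambda_0)\otimes V(\Lambda_i)$. Since the tensor product is completely reducible (equation \eqref{gendecomp}) and each connected component of the crystal is the crystal of the corresponding irreducible summand, the summands of $V(\Lambda_0)\otimes V(\Lambda_i)$ are in bijection with the connected components of $B(\Lambda_0)\otimes B(\Lambda_i)$, and each component carries a unique maximal (highest weight) element. Lemma~\ref{l4} then tells me precisely which isomorphism types $B(\Lambda_t+\Lambda_u-k\delta)$ occur and over what ranges of $t,u,k$; splitting into the two cases $t\le i$ (so $u=i-t$, $k\ge i-t$) and $t>i$ (so $u=n+i-t$, $k\ge t-i$) and letting $t$ run over $\lceil i/2\rceil\le t\le \lfloor (n+i)/2\rfloor$ reproduces exactly the two double direct sums in the statement.

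Second, for the multiplicity formula I would count components of a fixed type. The number of summands isomorphic to $V(\Lambda_t+\Lambda_u-k\delta)$ equals the number of connected components isomorphic to $B(\Lambda_t+\Lambda_u-k\delta)$, which equals the number of maximal elements of weight $\Lambda_t+\Lambda_u-k\delta$. By Lemma~\ref{l3} these maximal elements are in one-to-one correspondence with the partitions in $\mathscr{C}^i_n$ of that weight, that is, with $\mathscr{C}^i_{n,\Lambda_t+\Lambda_u-k\delta}$, giving $c^{\Lambda_t+\Lambda_u-k\delta}_{\Lambda_0,\Lambda_i}=|\mathscr{C}^i_{n,\Lambda_t+\Lambda_u-k\delta}|$. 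The convention that empty cardinality means non-occurrence is then automatic.

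Finally, for the rectangle claim I would treat the extremal value of $k$. Taking $\lambda$ to be the single-block partition corresponding to the asserted rectangle, I would verify conditions (1) and (2) of Lemma~\ref{l3} directly: condition (2) is vacuous since $l=1$, and condition (1) reduces to a congruence that holds identically ($n\equiv 0$, resp. $0\equiv 0$, modulo $n$), while $f_1<n$ is immediate from $\lceil i/2\rceil\le t$ and $t\le\lfloor(n+i)/2\rfloor$; hence $\lambda\in\mathscr{C}^i_n$. A short count of colored boxes then shows the number of $0$-colored boxes equals $i-t$ (resp. $t-i$), matching the minimal $k$ permitted by Lemma~\ref{l4}, and the remaining color multiplicities reproduce $\Lambda_t+\Lambda_u$ exactly as in the telescoping computation of Lemma~\ref{l4}. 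The main obstacle is \emph{uniqueness}: I must show that at minimal $k$ this rectangle is the only element of $\mathscr{C}^i_{n,\Lambda_t+\Lambda_u-k\delta}$. Since the weight fixes all color multiplicities $m_j$ (hence the total box number $(i-t)(n-t)$, resp.\ $t(t-i)$, and in particular pins $m_0=k$ at its minimum), I would argue that conditions (1)--(2) of Lemma~\ref{l3} cannot be satisfied by any partition with $l\ge 2$ of this weight: a second block would, through condition (2), force either an additional $0$-colored box, raising $m_0$ above the minimal $k$, or an inadmissible shape, in either case contradicting the fixed color data. Establishing this rigidity is the step I expect to be least automatic and to require the most care.
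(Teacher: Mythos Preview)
Your plan for the decomposition and the multiplicity formula matches the paper exactly: the paper simply invokes Lemma~\ref{l3} and Lemma~\ref{l4} in one sentence each.

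For the rectangle assertion the paper takes a somewhat different and shorter route than your proposed verification-plus-case-analysis. Rather than checking conditions (1)--(2) for the rectangle and then arguing uniqueness through the partition congruences, the paper works directly with the root expression $\alpha=\Lambda_0+\Lambda_i-\Lambda_t-\Lambda_u+k\delta$ already computed in Lemma~\ref{l4}. First, it observes that if two $0$-colored boxes lie in the same row or the same column of the extended Young diagram, then every coefficient of $\alpha_r$ in $\alpha$ is strictly positive; this is impossible at the minimal value of $k$ because (from the explicit formulas in the proof of Lemma~\ref{l4}) at least one such coefficient vanishes. Second, it computes $\mathrm{ht}(\alpha)$ at $k=|i-t|$ and obtains exactly $(i-t)(n-t)$ (resp.\ $t(t-i)$), pinning down the total number of boxes. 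These two facts together force the shape. This approach avoids the rigidity argument you flagged as ``least automatic'': the geometric constraint on $0$'s plus the box count replaces any direct induction on the partition conditions. Your plan would also work, but the paper's device of reading off vanishing coefficients of $\alpha_r$ from Lemma~\ref{l4} is the shortcut you were missing.
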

\begin{proof}
By Lemma \ref{l3} $|\mathscr{C}_{n,\mu}^i|$ are the outer multiplicities in the decomposition of $V(\Lambda_0)\otimes V(\Lambda_i)$.  Lemma \ref{l4} gives the weights that occur in the decomposition.\\
If two `0' s appeared in the same row or column, then the coefficient of $\alpha_r$ in $\alpha$ would be $>0$ for all $r\in I$, which is ruled out.  If $t\leq i$ then we have $$\text{ht}(\alpha)=\frac{(2k-u)(u+1)}{2}+(k-i+t)(t-u)+\frac{(2k-i+t+1)(i-t)}{2}+k(n-i-1).$$
If $k=i-t$ this equals $(i-t)(n-t).$  If $t>i$ we let $k=t-i$ and compute $$\text{ht}(\alpha)=(t-i)(i+1)+\frac{(t-i-1)(t-i)}{2}+\frac{(t-i-1)(t-i)}{2}=t(t-i).$$
\end{proof}

\section{Generating Functions for Outer Multiplicities}
In this section we consider the generating functions for the outer multiplicities in Theorem \ref{partition} and give explicit formulas for these generating functions.  First we define  the formal series $f$ in the indeterminates $u,v$ as follows: 
\begin{equation*}
f(u,v)=\sum_{k=-\infty}^\infty u^{k(k-1)/2}v^{k(k+1)/2}.
\end{equation*}
It is easy to verify that the function $f(u,v)$ satisfies the following properties: 
\begin{eqnarray}
f(u,v) &=& f (v,u),\\
f(q^r,q^s)&=& q^r f(q^{2r+s},q^{-r}),\label{shiftpower}
\end{eqnarray}

\noindent for integers $r, s$ not both equal to $0$.
Recall the Euler $\varphi$ function  $\varphi(q):=f(-q,-q^2)$. 
In what follows we will be using the well known Jacobi triple product identity (cf. \cite{A}):
\begin{equation}\label{tripleprod)}
f(u,v)=\prod_{j=1}^\infty (1-u^k v^k)(1+u^{k-1} v^k)(1+u^k v^{k-1}).
\end{equation}

Recall that the (formal) character of the highest weight irreducible $\mathfrak{g}$-module $V(\lambda),\lambda\in \mathfrak{h}^*$ is defined by the formal power series $\text{ch}(V(\Lambda))=\sum_{\alpha\in Q^+}\dim(V(\lambda)_{\lambda-\alpha})\\e(\lambda-\alpha),$ where $e(\mu),\mu\in \mathfrak{h}^*$ is an element of the group ring of $\mathfrak{h}^*$ satisfying $e(\mu)e(\nu)=e(\mu+\nu), \mu,\nu\in \mathfrak{h}^*.$  The $q$-character (or principally specialized character) $\text{ch}_q(V(\lambda))$ is defined by making the substitution $e(-\alpha_i)\mapsto q,i\in I$ in $e(-\lambda)\text{ch}(V(\lambda)).$  We have the following $q$-character formulas for the  $\widehat{\mathfrak{sl}}(n)$-modules $V(\Lambda_i), i \in \{0, 1, \dots , n-1\}$ and $V(\Lambda_0+\Lambda_j)$ for $j\in \{ 0,1, \dots, \lfloor n/2 \rfloor\}$ (cf. \cite{M}):
\begin{equation}
\text{ch}_q(V(\Lambda_i))=\prod_{\substack{j>0 \\ j\not \equiv 0 \pmod{n}}}(1-q^j)^{-1}= \frac{\varphi(q^{n})}{\varphi(q)}.
\label{qdim1}
\end{equation}
\begin{equation}
\text{ch}_q(V(\Lambda_0+\Lambda_j))=\frac{\varphi(q^{n})f(-q^{j+1},-q^{n-j+1})}{\varphi(q)^2}.
\label{qdim2}
\end{equation}
  
Now we define the generating function
\begin{equation}
B_t^i(q)=\sum_{k=r}^\infty b_{tk}^iq^{k-r},
\end{equation}
where
\begin{equation*} 
b_{tk}^i=\begin{cases} 
c^{\Lambda_t+\Lambda_{i-t}-k\delta}_{\Lambda_0,\Lambda_i}, \ \ t\in \{\lceil i/2 \rceil ,\dots, i\},\\   
c^{\Lambda_t+\Lambda_{n+i-t}-k\delta}_{\Lambda_0,\Lambda_i},t\in \{i+1,\dots, \lfloor (n+i)/2 \rfloor\},
\end{cases}
\end{equation*}
and $r=|i-t|$. 
By Theorem \ref{partition} we have:
\begin{multline*}
\text{ch}(V(\Lambda_0)) \text{ch}(V(\Lambda_i))= \sum_{t=\lceil i/2\rceil}^{i}\sum_{k=0}^\infty b_{tk}^i\:\text{ch}(V(\Lambda_t+\Lambda_{i-t}-k\delta))\\
+\:\sum_{t=i+1}^{\lfloor (n+i)/2\rfloor}\sum_{k=0}^\infty b_{tk}^i\:\text{ch}(V(\Lambda_t+\Lambda_{n+i-t}-k\delta))
\end{multline*}
Hence, multiplying both sides by $e(-\Lambda_0-\Lambda_i)$:
\begin{multline*}
e(-\Lambda_0-\Lambda_i)\text{ch}(V(\Lambda_0))\text{ch}(V(\Lambda_i))=\sum_{t=\lceil i/2 \rceil}^{i}\sum_{k=i-t}^{\infty}b_{tk}^i\:\text{ch}(V(\Lambda_t+\Lambda_{i-t}))e(-\Lambda_0-\Lambda_i-k\delta)\\
	+\:\sum_{t=i+1}^{\lfloor (n+i)/2\rfloor}\sum_{k=t-i}^{\infty}b_{tk}^i\:\text{ch}(V(\Lambda_t+\Lambda_{n+i-t}))e(-\Lambda_0-\Lambda_i-k\delta).
\end{multline*}
Now, specializing $e(-\alpha_i)=q$, we obtain:
\begin{multline*}
\text{ch}_q(V(\Lambda_0))\text{ch}_q(V(\Lambda_i))=\sum_{t=\lceil i/2 \rceil}^{ i } q^{(i-t)(n-t)}\text{ch}_q(V(\Lambda_t+\Lambda_{i-t}))\sum_{k=i-t}^\infty b_{tk}^i q^{nk}\\
+\:\sum_{t=i+1}^{ \lfloor (n+i)/2\rfloor} q^{t(t-i)}\text{ch}_q(V(\Lambda_t+\Lambda_{n+i-t}))\sum_{k=t-i}^\infty b_{tk}^i q^{nk}
\end{multline*}
which gives
\begin{multline*}
\frac{\varphi(q^{n})^2}{\varphi(q)^2}=\sum_{t=\lceil i/2 \rceil}^{i}q^{(t-i)(t-n)}\frac{\varphi(q^{n})f(-q^{2t-i+1},-q^{n-2t+i+1})}{\varphi(q)^2}B_t^i(q^{n})\\
+\:\sum_{t=i+1}^{\lfloor (n+i)/2\rfloor}q^{t(t-i)}\frac{\varphi(q^{n})f(-q^{2t-i+1},-q^{n-2t+i+1})}{\varphi(q)^2}B_t^i(q^{n})
\end{multline*}
and hence:
\begin{multline}
\varphi(q^{n})=\sum_{t=\lceil i/2 \rceil}^{i}q^{(t-n)(t-i)}f(-q^{2t-i+1},-q^{n-2t+i+1})B_t^i(q^{n})\label{eq2}\\
	+\:\sum_{t=i+1}^{\lfloor (n+i)/2\rfloor}q^{t(t-i)}f(-q^{n-2t+i+1},-q^{2t-i+1})B_t^i(q^{n}),
\end{multline}

Note that the series $\varphi(q^{n})=\prod_{j=1}^\infty(1-q^{nj})$ has a zero coefficient in front of $q^j$ whenever $j$ is not a multiple of $n$, and similar is the case for  $B_i(q^{n})$.  However, this is not the case for $q^{t(t-i)}f(-q^{2t-i+1},-q^{n-2t+i+1})$.   So the trick is to rearrange the sum to sort the powers of $q$ carefully as we do  below.\\

In the right-hand side of \eqref{eq2} we have:
\begin{eqnarray*}
f(-q^{n-2t+i+1},-q^{2t-i+1})&=&\sum_{j=0}^{n-1} \sum_{\substack{k\in \mathbb{Z}\\ k\equiv j \pmod{n}}}(-1)^k q^{\frac{1}{2}k((n+2)k+4t-2i-n)}\\
	&=&\sum_{j=0}^{n-1}\sum_{m \in \mathbb{Z}}(-1)^{nm+j}q^{\frac{1}{2}(nm+j)((n+2)(nm+j)+4t-2i-n))}\nonumber.
\end{eqnarray*}
We separate out terms involving the index $m$ in the exponent of $q$:
\begin{eqnarray*}
\lefteqn{ (nm+j)\left (\frac{(n+2)(nm+j)+4t-2i-n}{2}\right )}\\
	&=&(nm+j)\left (\frac{(n+2)nm+(n+2)j+4t-2i-n}{2}\right )\\
	&=&nm\left (\frac{(n+2)nm+(n+2)j+4t-2i-n}{2}\right )+j\frac{(n+2)nm}{2}\\
	&&+\:j\left (\frac{(n+2)j-n+4t-2i}{2}\right )\\
	&=&nm\left (\frac{(n+2)nm+2(n+2)j+4t-2i-n}{2}\right )+j\frac{(n+2)j-n+4t-2i}{2},
\end{eqnarray*}
which gives:
\begin{multline}
f(-q^{n-2t+i+1},-q^{2t-i+1})=\\
	\sum_{j=0}^{n-1}(-1)^jq^{\frac{1}{2}j((n+2)j+4t-2i-n)}\left (\sum_{m \in \mathbb{Z}}(-1)^{nm}q^{nm(\frac{1}{2}((n+2)nm+2(n+2)j+4t-2i-n)}\right ).\label{sepeq}
\end{multline}
Taking $q^{1/n}$ in the inner sum in \ref{sepeq} gives a series $\Psi_{tj}^i(q)=f((-1)^nq^r,(-1)^nq^s)$ for $r,s$ satisfying:
\begin{eqnarray*}
s+r&=&n(n+2)\\
s-r&=&2(n+2)j+4t-2i-n.
\end{eqnarray*}
Explicitly:
\begin{equation*}
\Psi_{tj}^i(q)=f\left ((-1)^nq^{\frac{1}{2}n(n+3)-2t+i-(n+2)j },(-1)^nq^{\frac{1}{2}n(n+1)+2t-i+(n+2)j)}\right ).
\end{equation*}
We have:
\begin{equation*}
q^{(t-n)(t-i)}f(-q^{2t+i+1},-q^{n-2t-i+1})=\sum_{j=0}^{n-1} (-1)^j q^{\frac{1}{2}nj(j-1)-n(t-i)+(t+j-i)(t+j)}\Psi_{tj}^i(q^n)
\end{equation*}
and
\begin{equation*}
q^{t(t-i)}f(-q^{2t+i+1},-q^{n-2t-i+1})=\sum_{j=0}^{n-1} (-1)^j q^{ \frac{1}{2}nj(j-1)+(t+j-i)(t+j)}\Psi_{tj}^i(q^n).
\end{equation*}
Thus \eqref{eq2} becomes:
\begin{multline}
\varphi(q^{n})=\sum_{t=\lceil i/2 \rceil}^{i}B_t^i(q^n)\sum_{j=0}^{n-1} (-1)^j q^{\frac{1}{2}nj(j-1)-n(t-i)+(t+j-i)(t+j)}\Psi_{tj}^i(q^n)\label{leq}\\
	+\:\sum_{t=i+1}^{\lfloor (n+i)/2\rfloor}B_t^i(q^n)\sum_{j=0}^{n-1} (-1)^j q^{ \frac{1}{2}nj(j-1)+(t+j-i)(t+j)}\Psi_{tj}^i(q^n)\\
\end{multline}

\emph{Example:} If $n=2$ and $i=0$ then $\lceil i/2 \rceil =0,\lfloor (n+i)/2 \rfloor=1,$ and (\ref{leq}) gives:
\begin{equation*}
\varphi(q^2)=B_0^0(q^2)(\Psi_{00}^0(q^2)-q\Psi_{01}^0(q^2))+B_1^0(q^2)(q\Psi_{10}^0(q^2)-q^4\Psi_{11}^0(q^2))
\end{equation*} 
where
$$
\Psi_{00}^0(q)=f(q^5,q^3),\qquad\Psi_{01}^0(q)=f(q,q^7),
$$
$$
\Psi_{10}^0(q)=f(q^3,q^5),\qquad\Psi_{11}^0(q)=f(q^{-1},q^9).
$$
One can verify that we have
\begin{eqnarray*}
\Psi_{00}^0(q^2)-q\Psi_{01}^0(q^2)&=&1-q-q^3+q^6+q^{10}-\cdots\\
	&=&f(-q,-q^3),
\end{eqnarray*}
and
\begin{eqnarray*}
q\Psi_{10}^0(q^2)-q^4\Psi_{11}^0(q^2)&=&q-q^2-q^4+q^7+q^{11}-\cdots\\
	&=&qf(-q,-q^3),
\end{eqnarray*}
as desired.

On the other hand, if $i=1$ then $\lceil i/2 \rceil =\lfloor (n+i)/2 \rfloor=1,$ and (\ref{leq}) gives:
\begin{equation*}
\varphi(q^2)=B_1^1(q^2)(\Psi_{10}^1(q^2)-q^2\Psi_{11}^1(q^2))
\end{equation*} 
where
$$
\Psi_{10}^1(q)=f(q^4,q^4),\:\Psi_{11}^1(q)=f(1,q^{8}).
$$
However, this is equivalent to:
\begin{equation*}
\varphi(q^2)=B_1^1(q^2)f(-q^2,-q^2)
\end{equation*}
from which one easily sees:
\begin{equation}\label{n2i1}
B_1^1(q)=\frac{\varphi(q)}{f(-q,-q)}=1+q+q^2+2q^3+2q^4+3q^5+4q^6+5q^7+\cdots.
\end{equation}

\emph{Example:} If $n=3,i=0$ then we have:
\begin{eqnarray*}
\varphi(q^3)&=&B_0^0(q^3)(\Psi_{00}^0(q^3)-q\Psi_{01}^0(q^3)+q^{7}\Psi_{02}^0(q^3))\\
&&+\:B_1^0(q^3)(q\Psi_{10}^0(q^3)-q^4\Psi_{11}^0(q^3)+q^{12}\Psi_{12}^0(q^3))
\end{eqnarray*} 
where
$$
\Psi_{00}^0(q)=f(-q^9,-q^6),\Psi_{01}^0(q)=f(-q^4,-q^{11}),\Psi_{02}^0(q)=f(-q^{-1},-q^{16}),
$$
$$
\Psi_{10}^0(q)=f(-q^7,-q^8),\Psi_{11}^0(q)=f(-q^2,-q^{13}),\Psi_{12}^0(q)=f(-q^{-3},-q^{18}).
$$

If $n=3,i=1$ then we have:
\begin{eqnarray*}
\varphi(q^3)&=&B_1^1(q^3)(\Psi_{10}^1(q^3)-q^2\Psi_{11}^1(q^3)+q^{9}\Psi_{12}^1(q^3))\\
&&+\:B_2^1(q^3)(q^2\Psi_{20}^1(q^3)-q^6\Psi_{21}^1(q^3)+q^{15}\Psi_{22}^1(q^3))
\end{eqnarray*}
where
\begin{equation}\label{n3Psi1}
\Psi_{10}^1(q)=f(-q^8,-q^7),\Psi_{11}^1(q)=f(-q^3,-q^{12}),\Psi_{12}^1(q)=f(-q^{-2},-q^{17}),
\end{equation}
\begin{equation}\label{n3Psi2}
\Psi_{20}^1(q)=f(-q^6,-q^9),\Psi_{21}^1(q)=f(-q,-q^{14}),\Psi_{22}^1(q)=f(-q^{-4},-q^{19}).
\end{equation}

The expression $(t+j-i)(t+j)$ appearing in (\ref{leq}) is the only contribution to the exponent of $q$ that possibly has non-zero residue modulo $n$, so we separate the right hand side of (\ref{leq}) into parts having $\overline{(t+j-i)(t+j)}$ equal.  We cyclically permute the variable $j$ by $t$ units to  $\overline{j-t}$, which transforms $\overline{(t+j-i)(t+j)}$ to $\overline{(j-i)j}$. In the new expression, if $j$ is chosen in the interval $(i/2,  (n+i)/2)$ then $(t+\overline{j-t}-i)(t+\overline{j-t})$ gives the same residue modulo $n$ as $(t+\overline{i-j-t}-i)(t+\overline{i-j-t})$ for $j$ in the same interval.  Therefore, we have:
\begin{equation}
\varphi(q^{n})=\sum_{t,j=\lceil i/2 \rceil}^{\lfloor(n+i)/2 \rfloor} B_t(q^n)q^{\overline{(j-i)j}}a_{tj}^i(q^n),\label{lineq}
\end{equation}
where
\begin{equation*}
a_{tj}^i(q)=\begin{cases}
(-1)^{\overline{j-t}}q^{\mu(t,i,\overline{j-t})-(t-i)}\Psi_{t,\overline{j-t}}^i(q) \text{ if }t\in [\frac{i}{2},i] \text{ and }j=\frac{i}{2}
\text{ or }\frac{n+i}{2},\\ \\
(-1)^{\overline{j-t}}q^{\mu(t,i,\overline{j-t})-(t-i)}\Psi_{t,\overline{j-t}}^i(q) + (-1)^{\overline{i-j-t}}q^{\mu(t,i,\overline{i-j-t})-(t-i)}\Psi_{t,\overline{i-j-t}}^i(q)\\
	\qquad  \text{if }t\in [\frac{i}{2},i] \text{ and }j\neq \frac{i}{2},\frac{n+i}{2},\\ \\
(-1)^{\overline{j-t}}q^{\mu(t,i,\overline{j-t})}\Psi_{t,\overline{j-t}}^i(q) \text{ if }t\in [i+1,\frac{n+i}{2}] \text{ and }j=\frac{i}{2} \text{ or }\frac{n+i}{2},\\ \\
(-1)^{\overline{j-t}}q^{\mu(t,i,\overline{j-t})}\Psi_{t,\overline{j-t}}^i(q)+(-1)^{\overline{i-j-t}}q^{\mu(t,i,\overline{i-j-t})}\Psi_{t,\overline{i-j-t}}^i(q) \text{ otherwise,}
\end{cases}
\end{equation*}
and
\begin{equation*}
\mu(t,i,k)=\frac{k(k-1)}{2}+\left \lfloor\frac{(t+k-i)(t+k)}{n}\right \rfloor.
\end{equation*}

Unfortunately, the numbers $\overline{(j-i)j}$--appearing as residues of exponents of $q$ in (\ref{lineq}) may not all be distinct for all integers $j \in [i/2 , (n+i)/2]$, depending on $n$ and $i$.  In fact, they are all distinct if and only if $(j-i)j \equiv (j'-i)j' \pmod{n}$ implies $j' \equiv i- j,j \pmod{n}$ for all $j,j'$, i.e. if and only if $(j'-j)(j'+j-i)\equiv 0 \pmod{n}$ has only trivial solutions modulo $n$.  The proof of the following is elementary, but we include it for the convenience of the reader.

\begin{proposition}\label{propmod}
The congruence $(j'-j)(j'+j-i)\equiv 0 \pmod{n}$ has only the trivial solutions $j'\equiv j \pmod{n}$ and $j'\equiv i-j \pmod{n}$ if and only if $n$ and $i$ satisfy one of the following conditions:
\begin{enumerate}
\item $i$ is even and $n$ is prime or twice an odd prime,
\item $i$ is odd and $n$ is prime or a power of 2.
\end{enumerate}
\end{proposition}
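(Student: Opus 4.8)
The plan is to translate the multiplicative congruence into a statement about zero divisors of $\mathbb{Z}/n\mathbb{Z}$ subject to a parity constraint, and then run a case analysis on the prime factorization of $n$ together with the parity of $i$. First I would set $u=j'-j$ and $v=j'+j-i$, so that the hypothesis reads $uv\equiv 0\pmod n$ while the two trivial solutions become exactly $u\equiv 0\pmod n$ and $v\equiv 0\pmod n$ (note $j'\equiv i-j\pmod n$ is the same as $j'+j-i\equiv 0\pmod n$). The substitution $(j,j')\mapsto(u,v)$ is a bijection from $\mathbb{Z}^2$ onto the set of integer pairs with $u+v\equiv i\pmod 2$, since $j'=(u+v+i)/2$ and $j=(v-u+i)/2$ are integers precisely under that parity condition. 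Hence the proposition is equivalent to the assertion that there is no pair $(u,v)$ with $u\not\equiv 0$, $v\not\equiv 0\pmod n$, $uv\equiv 0\pmod n$, and $u+v\equiv i\pmod 2$. I will call such a pair a \emph{bad pair}, so the task becomes characterizing the $(n,i)$ admitting no bad pair.

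Next I would dispose of the odd case and set up the parity bookkeeping for the even case. If $n$ is odd, then adding $n$ to $u$ (or to $v$) flips its parity without changing its residue mod $n$ or the product $uv\bmod n$; thus any nontrivial zero-divisor pair can be adjusted to meet either target parity, so a bad pair exists as soon as $\mathbb{Z}/n\mathbb{Z}$ has a nonzero zero divisor, i.e.\ as soon as $n$ is composite. This already gives the criterion for odd $n$: no bad pair iff $n$ is prime, which matches both clauses of the statement. For even $n$, write $n=2^a m$ with $m$ odd and $a\ge 1$. Two structural facts drive the rest: a \emph{both-odd} bad pair is impossible (the product of two odd numbers is odd and cannot be divisible by the even number $n$), and an odd residue $x$ is a unit exactly when $\gcd(x,m)=1$, so odd zero divisors exist iff $m>1$.

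I would then carry out the case analysis. When $n=2^a$ is a power of two, every odd residue is a unit, so every zero-divisor pair is both even, forcing $u+v$ even; hence bad pairs occur only for $i$ even, and only when a nontrivial zero divisor exists at all (that is, $a\ge 2$). This yields: power-of-two $n$ works for $i$ odd always, and for $i$ even only when $n=2$. When $n=2^a m$ is even but not a power of two (so $m\ge 3$), I would pick an odd prime $p\mid m$ and exhibit the mixed-parity bad pair $u=n/p$ (even, since it retains the factor $2^a$) and $v=p$ (odd), with $uv=n\equiv 0\pmod n$; this kills $i$ odd in all such cases. For $i$ even I would look instead for a both-even bad pair $u=2s$, $v=2t$: when $a\ge 2$ the pair $u=2m$, $v=2^{a-1}$ works, whereas when $a=1$ the condition $n\mid uv$ reduces to $m\mid st$ with $m\nmid s$ and $m\nmid t$, which is solvable precisely when $m$ is composite. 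Assembling these subcases gives exactly conditions (1) and (2).

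The main obstacle I anticipate is the borderline even cases, where I must show that \emph{no} bad pair exists rather than merely failing to find one. Concretely, for $n$ prime, for $n=2$, and for $n=2p$ with $p$ an odd prime under $i$ even, the argument hinges on two points needing careful verification: that both-odd pairs are genuinely excluded for even $n$, and that in the $a=1$ subcase a both-even pair would force $m\mid st$ with neither factor divisible by $m$, which is impossible when $m$ is prime. Keeping the parity constraint and the nonvanishing conditions $u,v\not\equiv 0\pmod n$ simultaneously satisfied in every construction is the delicate bookkeeping, but each individual verification is elementary.
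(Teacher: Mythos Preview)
Your proof is correct and rests on the same underlying idea as the paper's: both arguments hinge on the parity relation between the two factors $j'-j$ and $j'+j-i$ (their sum $2j'-i$ forces them to have equal parity when $i$ is even and opposite parity when $i$ is odd), and then split cases on the $2$-adic content of $n$. Your substitution $u=j'-j$, $v=j'+j-i$ and the reformulation as ``zero-divisor pairs in $\mathbb{Z}/n\mathbb{Z}$ subject to a parity constraint on $u+v$'' is a tidy repackaging of what the paper does implicitly; the paper instead writes down explicit $(j,j')$ for a generic factorization $n=rs$ and checks directly when those become trivial. The two organizations differ mainly in that yours exhausts cases by the shape $n=2^am$ (handling $a\ge 2$ and $a=1$ separately in the even-$i$ subcase), whereas the paper argues from an arbitrary factorization and then specializes. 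Neither introduces an idea the other lacks, so this should be regarded as essentially the same proof in a slightly more structural dress.
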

\begin{proof}
Suppose that $n$ is a composite integer with factorization $n=rs$.  If $i$ is even then $j'=r+s+i/2,j=s-r+i/2$ gives a solution that is trivial if and only if $n=2s$ or $n=2r$.  Since the factorization of $n$ was chosen arbitrarily, we deduce that $n=2p$ for some prime $p$ (and by examining the cases, we can rule out $n=4$).  If $i$ is odd, and $n$ is not a power of $2$, then we can choose a factorization such that $r$ is odd.  In such case $j'=s+(r+i)/2, j=-s+(r+i)/2$ gives a non-trivial solution. 

Conversely, if $n$ is prime then $n|(j'-j)(j'+j-i)$ implies $n|(j'-j)$ or $n|(j'+j-i)$, i.e. any solution is trivial.  If $n=2p$ for an odd prime $p$, and $i$ is even then $2p|(j'-j)(j'+j-i)$ implies that $2$ is a factor of both $j'-j$ and $j'+j-i$, since both have the same parity, and at least one factor is divisible by $p$.  These must be the same factor, since $p$ is odd.  If $i$ is odd and $n=2^t$, then $2^t|(j'-j)(j'+j-i)$ implies that $2^t|(j'-j)$ or $2^t | (j'+j-i)$ since $j'-j$ and $j'+j-i$ have different parity, which finishes the proof.
\end{proof}

Now assume that $i,n$ satisfy one of the two conditions in Proposition \ref{propmod}.  In this case (\ref{lineq}) is equivalent to the following set of linear equations:
\begin{equation*}
q^{\overline{(j-i)j}}\sum_{t=\lceil i/2 \rceil}^{\lfloor (n+i)/2 \rfloor }B_t^i(q^n)a_{tj}^i(q^n)=\delta_{ij}\varphi(q^n), j=\left \lceil\frac{ i}{2} \right \rceil, \dots, \left \lfloor \frac{n+i}{2} \right \rfloor,
\end{equation*}
or,
\begin{equation*}
\sum_{t=\lceil i/2 \rceil}^{\lfloor (n+i)/2 \rfloor }B_t^i(q)a_{tj}^i(q)=\delta_{ij}\varphi(q), j=\left \lceil\frac{i}{2} \right \rceil, \dots, \left \lfloor \frac{n+i}{2} \right \rfloor.
\end{equation*}
which can be written in matrix form as 
\begin{equation*}
A^i(q)\mathbf{B}^i(q)=\mathbf{\Phi}^i(q)
\end{equation*}
where
\begin{multline*}
A^i(q)^T=(a_{tj}^i(q))_{t,j\in I}, \mathbf{B}^i(q)=(B_j^i(q))_{j\in I}^T, \mathbf{\Phi}^i(q)=(\delta_{ij}\varphi(q))_{j\in I}^T,\\
I=\left \{ \left \lceil\frac{i}{2} \right \rceil, \dots, \left \lfloor \frac{n+i}{2} \right \rfloor \right \}.
\end{multline*}

Therefore, Cramer's rule yields the following proposition.
\begin{proposition} \label{deteqn} For $0 \le i \le n-1$, $\left \lceil\frac{i}{2} \right \rceil \le t \le \left \lfloor \frac{n+i}{2} \right \rfloor$, we have
\begin{equation*}
B_t^i(q)=\frac{(-1)^{i+t}\varphi(q)\det(\widetilde{A^i(q)}_{it})}{\det (A^i(q))},
\end{equation*}
where $\widetilde{A^i(q)}_{it}$ denotes the matrix $A^i(q)$ with the $i$th row and $t$th column deleted.
\end{proposition}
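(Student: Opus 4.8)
The plan is to read the formula straight off Cramer's rule applied to the linear system $A^i(q)\mathbf{B}^i(q)=\mathbf{\Phi}^i(q)$ assembled just before the statement. First I would pin down the index bookkeeping. The equations $\sum_{t}a_{tj}^i(q)B_t^i(q)=\delta_{ij}\varphi(q)$ say that the coefficient matrix has $(j,t)$-entry $a_{tj}^i(q)$, which is exactly $A^i(q)$ since $A^i(q)^T=(a_{tj}^i(q))$. Working over the field $\mathbb{C}((q))$ of formal Laurent series, in which $\det(A^i(q))$ is a nonzero element (as the proposition tacitly assumes by dividing by it), the solution vector $\mathbf{B}^i(q)$ is unique, and Cramer's rule gives $B_t^i(q)=\det(M)/\det(A^i(q))$, where $M$ is obtained from $A^i(q)$ by replacing its $t$-th column with the right-hand side $\mathbf{\Phi}^i(q)$.

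The key structural feature to exploit is that $\mathbf{\Phi}^i(q)=(\delta_{ij}\varphi(q))_{j\in I}$ is supported at a single index: its only nonzero entry is $\varphi(q)$, in the row labeled $i$. Expanding $\det(M)$ in cofactors along its $t$-th column therefore collapses to one surviving term, namely $\varphi(q)$ times the signed cofactor at position $(i,t)$. Because only column $t$ was altered in forming $M$, the minor appearing in that cofactor is the determinant of $A^i(q)$ with row $i$ and column $t$ deleted, i.e.\ $\det(\widetilde{A^i(q)}_{it})$. Collecting, $B_t^i(q)=\varepsilon\,\varphi(q)\det(\widetilde{A^i(q)}_{it})/\det(A^i(q))$ for a sign $\varepsilon$ still to be determined.

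The one step needing genuine care---and the main (if modest) obstacle---is that sign, precisely because the index set $I=\{\lceil i/2\rceil,\dots,\lfloor(n+i)/2\rfloor\}$ does not start at $1$, so the labels $i$ and $t$ are not the physical row and column positions. Writing $m=\lceil i/2\rceil$, the row labeled $i$ sits in physical position $i-m+1$ and the column labeled $t$ in position $t-m+1$, so the cofactor sign is
\[
(-1)^{(i-m+1)+(t-m+1)}=(-1)^{\,i+t-2m+2}=(-1)^{i+t},
\]
the shift $-2m+2$ being even and hence harmless. This yields $\varepsilon=(-1)^{i+t}$ and the stated formula. I would finish by remarking that the only hypothesis implicitly in play is $\det(A^i(q))\neq 0$, which is what licenses the division in $\mathbb{C}((q))$; everything else is the formal content of Cramer's rule together with a single cofactor expansion.
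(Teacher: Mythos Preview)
Your proof is correct and follows exactly the paper's approach: the paper simply states that ``Cramer's rule yields the following proposition,'' and you have supplied the details, including the careful sign computation arising from the shifted index set $I=\{\lceil i/2\rceil,\dots,\lfloor(n+i)/2\rfloor\}$. Your observation that the shift $-2m+2$ is even (so the cofactor sign reduces to $(-1)^{i+t}$) is precisely the bookkeeping the paper leaves implicit.
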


\section{Examples and Identities}

Comparing the result in Theorem \ref{partition} and Proposition \ref{deteqn}  we now have the following theorem which gives generating function identities.

\begin{theorem}\label{th:main}
Let $i$ be as in Proposition \ref{propmod} for $n\geq 2.$  Then, for $t\in \{\lceil i/2 \rceil,\dots,\lfloor (n+i)/2 \rfloor \}$:
\begin{equation}\label{eq:main}
\sum_{k=|i-t|}^\infty|\mathscr{C}^n_{\Lambda_t+\Lambda_u-k\delta}|q^{k-|i-t|}=
\frac{(-1)^{i+t}\varphi(q)\det(\widetilde{A^i(q)}_{it})}{\det (A^i(q))},
\end{equation}
where $u = i-t$ for $t \le i$ and $u = n+i-t$ for $t > i$.
\end{theorem}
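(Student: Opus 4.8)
The plan is to recognize both sides of \eqref{eq:main} as descriptions of a single generating function and to match them. First I would observe that the left-hand side is, essentially by definition, the series $B_t^i(q)$. Indeed, Theorem \ref{partition} identifies each outer multiplicity as a partition count, $c^{\Lambda_t+\Lambda_u-k\delta}_{\Lambda_0,\Lambda_i}=|\mathscr{C}^i_{n,\Lambda_t+\Lambda_u-k\delta}|$, with $u=i-t$ when $t\le i$ and $u=n+i-t$ when $t>i$; in both cases the admissible values of $k$ begin at $k=|i-t|$, since Theorem \ref{partition} fixes the lower bound at $i-t$ for $t\le i$ and at $t-i$ for $t>i$. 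Feeding these equalities into the definition $B_t^i(q)=\sum_{k=r}^\infty b_{tk}^i q^{k-r}$ with $r=|i-t|$ shows that the left-hand side of \eqref{eq:main} is precisely $B_t^i(q)$.

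Next I would invoke Proposition \ref{deteqn} to rewrite $B_t^i(q)$ in closed form. Here the hypothesis that $(n,i)$ satisfies Proposition \ref{propmod} is essential: it guarantees that the residues $\overline{(j-i)j}$ appearing in \eqref{lineq} are pairwise distinct for $j\in\{\lceil i/2\rceil,\dots,\lfloor(n+i)/2\rfloor\}$, which is exactly what allows \eqref{lineq} to be separated into the nonsingular linear system $A^i(q)\mathbf{B}^i(q)=\mathbf{\Phi}^i(q)$. Cramer's rule, as recorded in Proposition \ref{deteqn}, then yields
\[
B_t^i(q)=\frac{(-1)^{i+t}\varphi(q)\det(\widetilde{A^i(q)}_{it})}{\det(A^i(q))},
\]
which is the right-hand side of \eqref{eq:main}. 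Equating the two expressions for $B_t^i(q)$ finishes the argument.

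I do not expect any serious analytic or combinatorial obstacle, since the statement merely splices together Theorem \ref{partition} and Proposition \ref{deteqn}. The one point requiring care is the index bookkeeping: I would verify that the unified weight $\Lambda_t+\Lambda_u$ and the single starting index $r=|i-t|$ correctly encode both of the separate ranges $t\in\{\lceil i/2\rceil,\dots,i\}$ and $t\in\{i+1,\dots,\lfloor(n+i)/2\rfloor\}$ used to define $b_{tk}^i$, and confirm that the hypothesis on $(n,i)$ is what licenses the passage from \eqref{lineq} to the invertible matrix equation underlying Proposition \ref{deteqn}.
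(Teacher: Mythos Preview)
Your proposal is correct and follows exactly the paper's own argument: both sides of \eqref{eq:main} are identified with the generating function $B_t^i(q)$, the left via Theorem~\ref{partition} and the right via Proposition~\ref{deteqn}. The paper's proof is a one-sentence version of what you wrote, and your additional remarks on the index bookkeeping and the role of the hypothesis from Proposition~\ref{propmod} only make the reasoning more explicit.
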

\begin{proof}
The left side and the right side of (\ref{eq:main}) both count the outer multiplicity of $V(\Lambda_t+\Lambda_u+k\delta)$ in the decomposition of $V(\Lambda_0)\otimes V(\Lambda_i)$ by Theorem \ref{partition} and Proposition \ref{deteqn} respectively.
\end{proof}
In \cite{MW}, we considered the case $i = 0$ for $n = 2, 3$ and showed that we obtain certain identities in the Slater list \cite{S} and some new identities for $i=0$ and $n=3$.
In this paper we consider the case $i=1$ for $n = 2, 3, 4$ and obtain some seemingly new identities. 

In the case $n=2,i=1$, as we have seen in (\ref{n2i1}):
\begin{equation*}
B_1^1(q)=\frac{\varphi(q)}{f(-q,-q)}=\frac{\prod_{j=1}^\infty(1-q^j)}{\prod_{j=1}^\infty(1-q^{2j})(1-q^{2j-1})^2}=\frac{1}{\prod_{j=1}^\infty(1-q^{2j-1})}
\end{equation*}
 However, the set $\mathscr{C}_{2, \Lambda_1 + \Lambda_0 -k\delta}^1$ is the set of partitions of $2k$ into distinct even parts which is the same as the number of partitions of $k$ into distinct parts.  Thus Theorem \ref{th:main} in this case becomes the Euler's identity (see \cite{A}). This agrees with the corresponding result given in \cite{F}.

We now consider the case $i=1, n=3$. In this case, $\lceil i/2 \rceil = 1, \lfloor(n+i)/2\rfloor=2$.  Using  (\ref{shiftpower}),  (\ref{n3Psi1}), and (\ref{n3Psi2}),
the matrix $A^1(q)$ in Proposition \ref{deteqn} is:
\begin{eqnarray*}
A^1(q) &=& \begin{pmatrix}
\Psi_{10}^1(q)+q^3\Psi_{12}^1(q)&q^5\Psi_{22}^1(q)-q^2\Psi_{21}^1(q)\\
-\Psi_{11}^1(q)&\Psi_{20}^1(q)
\end{pmatrix}\\
&=&
\begin{pmatrix}
f(-q^8,-q^7)+q^3f(-q^{-2},-q^{17})&q^5f(-q^{-4},-q^{19})-q^2f(-q,-q^{14})\\
-f(-q^3,-q^{12})&f(-q^6,-q^9)
\end{pmatrix}\\
&=&
\begin{pmatrix}
f(-q^8,-q^7)+qf(-q^{2},-q^{13})&qf(-q^{4},-q^{11})-q^2f(-q,-q^{14})\\
-f(-q^3,-q^{12})&f(-q^6,-q^9)
\end{pmatrix}\\
\end{eqnarray*}
Now, using Frank Garvan's Maple $q$-series package (\cite{G}) we see that  $\det (A^1(q)) = \varphi(q)^2$.  Therefore, Proposition \ref{deteqn} gives the following $q$-series for the outer multiplicities (using (\ref{tripleprod)})):

\begin{eqnarray*}
B_1^1(q)&=&\frac{f(-q^6,-q^9)}{\varphi(q)}\\
&=&\frac{\prod_{j=1}^{\infty}(1-q^{15j})(1-q^{15j-6})(1-q^{15j-9})}{\prod_{j=1}^{\infty}(1-q^j)}\\
B_2^1(q)&=&\frac{f(-q^3,-q^{12})}{\varphi(q)}\\
&=&\frac{\prod_{j=1}^{\infty}(1-q^{15j})(1-q^{15j-3})(1-q^{15j-12})}{\prod_{j=1}^{\infty}(1-q^j)}
\end{eqnarray*}
Now Theorem \ref{th:main} gives the following identities:
\begin{equation}
\frac{\prod_{j=1}^{\infty}(1-q^{15j})(1-q^{15j-6})(1-q^{15j-9})}{\prod_{j=1}^{\infty}(1-q^j)} = \sum_{k=0}^\infty a(k) q^k,
\end{equation}
\begin{equation}
 \frac{\prod_{j=1}^{\infty}(1-q^{15j})(1-q^{15j-3})(1-q^{15j-12})}{\prod_{j=1}^{\infty}(1-q^j)} = \sum_{k=0}^\infty b(k) q^k,
\end{equation}
where $a(k)$ (respectively $b(k)$) is the number of partitions $(\lambda_1^{f_1},\lambda_2^{f_2},\dots, \lambda_l^{f_l})$ of $3k$ (respectively $3k+2$) with $f_1 - \lambda_1 +1 \equiv 0\pmod{3}$,
$f_j+f_{j+1}+\lambda_{j}-\lambda_{j+1}\equiv0 \pmod{3}$,  $f_j <3$ for $1\leq j<l$.
 
%

Now we consider the case $i=1$ and $n=4$ which satisfies the conditions in 
Proposition \ref{propmod}.  In this case we have $\lceil i/2 \rceil = 1, \lfloor(n+i)/2\rfloor=2$ and
\begin{equation*}
\Psi_{10}^1(q)=f(q^{13},q^{11}),\Psi_{11}^1(q)=f(q^{7},q^{17}),\Psi_{12}^1(q)=f(q,q^{23}),\Psi_{13}^1(q)=f(q^{-5},q^{29})
\end{equation*}
\begin{equation*}
\Psi_{20}^1(q)=f(q^{11},q^{13}),\Psi_{21}^1(q)=f(q^{5},q^{19}),\Psi_{22}^1(q)=f(q^{-1},q^{25}),\Psi_{23}^1(q)=f(q^{-7},q^{31}).
\end{equation*}
Hence we have:
\begin{eqnarray*}
A^1(q)&=&
\begin{pmatrix}
\Psi_{10}^1(q)-q^6\Psi_{13}^1(q)&-q^8\Psi_{23}^1(q)+q^4\Psi_{22}^1(q)\\
-\Psi_{11}^1(q)+q^{2}\Psi_{12}^1(q)&\Psi_{20}^1(q)-q\Psi_{21}^1(q)
\end{pmatrix}
\\
&=&
\begin{pmatrix}
f(q^{13},q^{11})-q^6f(q^{-5},q^{29})&-q^8f(q^{-7},q^{31})+q^4f(q^{-1},q^{25})\\
-f(q^7,q^{17})+q^2f(q,q^{23})&f(q^{11},q^{13})-qf(q^5,q^{19})
\end{pmatrix}\\
&=&
\begin{pmatrix}
f(q^{13},q^{11})-qf(q^{5},q^{19})&-qf(q^{7},q^{17})+q^3f(q,q^{23})\\
-f(q^7,q^{17})+q^2f(q,q^{23})&f(q^{11},q^{13})-qf(q^5,q^{19})
\end{pmatrix}
\end{eqnarray*}

Using Frank Garvan's $q$-series package we see that $\det(A^1(q))=\varphi(q)f(-q,-q).$  Therefore:
\begin{eqnarray*}
B_1^1(q)&=&\frac{f(q^{11},q^{13})-qf(q^5,q^{19})}{f(-q,-q)}\\
B_2^1(q)&=&\frac{f(q^7,q^{17})-q^2f(q,q^{23})}{f(-q,-q)}
\end{eqnarray*}

Hence by Theorem \ref{th:main} we have the following identities:
\begin{equation}
\frac{f(q^{11},q^{13})-qf(q^5,q^{19})}{f(-q,-q)} = \sum_{k=0}^\infty c(k) q^k,
\end{equation}
and
\begin{equation}
\frac{f(q^7,q^{17})-q^2f(q,q^{23})}{f(-q,-q)} = \sum_{k=0}^\infty d(k) q^k,
\end{equation}
where $c(k)$ (respectively $d(k)$) is the number of partitions $(\lambda_1^{f_1},\lambda_2^{f_2},\dots, \lambda_l^{f_l})$ of $4k$ (respectively $4k+2$) with $f_1 - \lambda_1 +1 \equiv 0\pmod{4}$, $f_j+f_{j+1}+\lambda_{j}-\lambda_{j+1}\equiv0 \pmod{4}$, $f_j <4$, for $1\leq j<l$.

\bibliographystyle{amsplain}

\end{document}